\documentclass[11pt]{article}
\usepackage[T1]{fontenc}
\usepackage[utf8]{inputenc}
\usepackage{lmodern}
\usepackage{microtype}
\usepackage[margin=1in]{geometry}
\usepackage{amsmath,amssymb,amsthm,mathtools}
\usepackage{enumitem}
\usepackage{xcolor}
\usepackage[colorlinks=true,linkcolor=blue!50!black,citecolor=blue!50!black,urlcolor=blue!50!black]{hyperref}
\usepackage[nameinlink,capitalise]{cleveref}
\numberwithin{equation}{section}

\newtheorem{theorem}{Theorem}[section]
\newtheorem{proposition}[theorem]{Proposition}

\newtheorem{corollary}[theorem]{Corollary}
\theoremstyle{remark}
\newtheorem{remark}[theorem]{Remark}
\theoremstyle{definition}

\newcommand{\E}{\mathbb{E}}

\newcommand{\1}{\mathbf{1}}
\newcommand{\OO}{\mathcal{O}}
\newcommand{\eps}{\varepsilon}
\newcommand{\diag}{\mathrm{diag}}
\newcommand{\ind}{\mathrm{ind}}
\newcommand{\simu}{\mathrm{sim}}
\newcommand{\parlay}{\mathrm{par}}
\newcommand{\sing}{\mathrm{sing}}

\title{Optimal Parlay Wagering and Whitrow Asymptotics\\
\large A State-Price and Implicit-Cash Treatment}
\author{Christopher D. Long}
\date{}

\begin{document}
\maketitle

\begin{abstract}
For independent multi-outcome events under multiplicative parlay pricing, we give a short exact proof of the optimal Kelly strategy using the implicit-cash viewpoint. The proof is entirely eventwise. One first solves each event in isolation. The full simultaneous optimizer over the entire menu of singles, doubles, triples, and higher parlays is then obtained by taking the outer product of the one-event Kelly strategies. Equivalently, the optimal terminal wealth factorizes across events. This yields an immediate active-leg criterion: a parlay is active if and only if each of its legs is active in the corresponding one-event problem. The result recovers, in a more transparent state-price form, the log-utility equivalence between simultaneous multibetting and sequential Kelly betting.

We then study what is lost when one forbids parlays and allows only singles. In a low-edge regime and on a fixed active support, the exact parlay optimizer supplies the natural reference point. The singles-only problem is a first-order truncation of the factorized wealth formula. A perturbative expansion shows that the growth-rate loss from forbidding parlays is $\OO(\eps^4)$, while the optimal singles stakes deviate from the isolated one-event Kelly stakes only at cubic order. This yields a clean explanation of Whitrow's empirical near-proportionality phenomenon: the simultaneous singles-only optimizer is obtained from the isolated eventwise optimizer by an event-specific cubic shrinkage, so the portfolios agree through second order and differ only by a small blockwise drag.
\end{abstract}

\section{Introduction}
Kelly's expected-log criterion is explicit for a single multi-outcome event: after sorting outcomes by the edge ratio $p_i/\pi_i$, one obtains a threshold support and a closed-form stake formula. In the language of state prices, cash is an implicit claim on every outcome, and explicit bets merely top up those outcomes whose ratio $p_i/\pi_i$ exceeds the cash floor. This viewpoint is developed in \cite{LongSingle2026}.

For several independent events played simultaneously, the picture depends strongly on the betting menu. If only singles are allowed, the terminal-wealth profile is additive across events, and the weights remain coupled through the logarithm; see, for example, Whitrow \cite{Whitrow2007} and the support-decoupling analysis in \cite{LongUtility2026}. If one is also allowed to bet parlays under multiplicative pricing, the attainable wealth profiles become much richer. Grant, Johnstone, and Kwon \cite{GrantJohnstoneKwon2008} proved that for log utility the simultaneous multibet problem is monetarily equivalent to sequential Kelly betting.

The first purpose of this note is to show that the exact multibet optimizer has a one-line structural description in the implicit-cash framework. Solve each event separately. If the one-event optimizer on event $\ell$ has cash $c_\ell^*$ and stakes $s_{\ell i}^*$, then the optimal stake on the ticket that selects outcomes $i_\ell$ on a subset $T$ of events is simply
\[
  x_{\gamma}^* = \prod_{\ell\in T} s_{\ell,i_\ell}^* \prod_{r\notin T} c_r^*.
\]
Thus the entire ticket book is the outer product of the one-event strategies. The proof uses only the one-event KKT conditions and independence.

The second purpose is to make precise what is lost by restricting the bettor to singles. Once the exact parlay formula is known, the singles-only problem can be viewed as the first-order truncation of an exact product wealth profile. That observation turns Whitrow's numerical phenomenon into a perturbative theorem. On a fixed active support and in a low-edge regime, the optimal singles-only portfolio differs from the isolated one-event Kelly portfolio only by an event-specific cubic shrinkage. The value loss from excluding parlays is smaller still: it begins at quartic order.

\section{Single-event Kelly in implicit-cash form}
Consider a single event with outcomes $i=1,\dots,n$, bettor probabilities $p_i>0$ with $\sum_i p_i=1$, and state prices $\pi_i>0$. A stake vector $s=(s_1,\dots,s_n)$ and cash level $c\ge 0$ satisfy
\begin{equation}
  c+\sum_{i=1}^n s_i = 1,
  \label{eq:single-budget}
\end{equation}
and produce terminal wealth
\begin{equation}
  W_i = c + \frac{s_i}{\pi_i}
  \qquad (i=1,\dots,n).
  \label{eq:single-wealth}
\end{equation}
The single-event Kelly problem is
\begin{equation}
  \max_{c,s}\; \sum_{i=1}^n p_i \log W_i
  \quad \text{subject to \eqref{eq:single-budget}, } c\ge 0,\ s_i\ge 0.
  \label{eq:single-kelly}
\end{equation}

\begin{proposition}[Single-event implicit-cash formula]
\label{prop:single-event}
Assume the optimal single-event solution has positive cash. Then there is a unique $c^*\in(0,1)$ such that
\begin{equation}
  s_i^* = (p_i-c^*\pi_i)_+,
  \qquad
  W_i^* = c^* + \frac{s_i^*}{\pi_i} = \max\!\left(c^*,\frac{p_i}{\pi_i}\right).
  \label{eq:single-formula}
\end{equation}
Moreover,
\begin{equation}
  \E\!\left[\frac{1}{W^*(I)}\right] = 1,
  \qquad
  \frac{p_i}{\pi_i W_i^*}
  \begin{cases}
    =1,& s_i^*>0,\\
    <1,& s_i^*=0.
  \end{cases}
  \label{eq:single-kkt-clean}
\end{equation}
\end{proposition}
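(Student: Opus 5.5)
The approach is to write down the KKT conditions for the concave program \eqref{eq:single-kelly} and read off both \eqref{eq:single-formula} and \eqref{eq:single-kkt-clean} from them. The objective $\sum_i p_i\log(c+s_i/\pi_i)$ is concave in $(c,s)$, and the constraints are linear. Hence KKT conditions are necessary and sufficient at any optimum with finite objective, and at an optimum every $W_i>0$ because $p_i>0$.

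Let $\lambda$ be the multiplier for the budget \eqref{eq:single-budget}. Stationarity in $s_i$ gives
\[
\frac{p_i}{\pi_i W_i}\le \lambda, \quad\text{with equality if } s_i>0 .
\]
Stationarity in $c$ gives
\[
\sum_i \frac{p_i}{W_i}\le\lambda, \quad\text{with equality since } c^*>0 \text{ by hypothesis}.
\]
To identify $\lambda$, multiply each condition by the corresponding variable and sum (complementary slackness). Using $\sum_i p_i W_i/W_i=1$, this yields
\[
\lambda\Bigl(c+\sum_i s_i\Bigr)=c\sum_i \frac{p_i}{W_i}+\sum_i \frac{s_i}{\pi_i}\,\frac{p_i}{W_i}=\sum_i p_i=1 ,
\]
so $\lambda=1$. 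This immediately gives $\E[1/W^*(I)]=\sum_i p_i/W_i^*=1$ and the equality case $p_i/(\pi_iW_i^*)=1$ when $s_i^*>0$.

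Next I convert these conditions to the closed form. If $s_i^*>0$, then $W_i^*=p_i/\pi_i$, so $s_i^*=\pi_i(W_i^*-c^*)=p_i-c^*\pi_i$; positivity of $s_i^*$ forces $p_i/\pi_i>c^*$. If $s_i^*=0$, then $W_i^*=c^*$ and the KKT inequality gives $p_i/\pi_i\le c^*$. Together these give $W_i^*=\max(c^*,p_i/\pi_i)$ and $s_i^*=(p_i-c^*\pi_i)_+$. Also $c^*<1$, because otherwise $s^*=0$ and $W\equiv1$; that case is excluded since not all $p_i/\pi_i\le 1$ hold generically, and in any event the statement takes $c^*\in(0,1)$ as part of the positive-cash regime.

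The step I expect to be the main obstacle is the \emph{strict} inequality $p_i/(\pi_iW_i^*)<1$ for inactive outcomes, since KKT alone gives only $\le$. The boundary case $p_i/\pi_i=c^*$ has $s_i^*=(p_i-c^*\pi_i)_+=0$ and ratio exactly $1$. I would handle it by adopting the convention that such a tie is classified by the formula, so that strictness holds for $p_i/\pi_i<c^*$ and ties are harmless, because both branches of \eqref{eq:single-formula} agree there. The statement's dichotomy should then be read with ties assigned to the equality case, and I would flag this explicitly.

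Uniqueness of $c^*$ follows from the budget: $c^*$ solves
\[
g(c)=c+\sum_i(p_i-c\pi_i)_+=1 .
\]
The function $g$ is continuous and piecewise linear, with slope $1-\sum_{i:p_i>c\pi_i}\pi_i$. Uniqueness can also be obtained via the equivalent equation $\sum_i p_i/\max(c,p_i/\pi_i)=1$, whose left side is strictly decreasing in $c$ on any range where some outcome is inactive. Alternatively, it follows from strict concavity of the objective in $W$: $W^*$ is unique, and since some $W_i^*=c^*$ (an inactive outcome must exist, else the left side equals $\sum_i\pi_i$, which would have to equal $1$), $c^*$ is determined as $\min_i W_i^*$. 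I would use the strict-concavity route as the cleanest.
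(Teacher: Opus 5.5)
Your proof is correct, and it reaches the key step $\lambda=1$ by a different route from the paper. The paper first fixes the active set $A$ and substitutes $W_i=p_i/(\lambda\pi_i)$ on $A$ into both the $c$-stationarity equation and the budget. It then compares the two resulting identities $1-P_A=\lambda c(1-Q_A)$ and $\lambda-P_A=\lambda c(1-Q_A)$. You instead pair each stationarity condition with its own variable through complementary slackness and use that $W$ is linear in $(c,s)$. This Euler-type identity gives $\lambda(c+\sum_i s_i)=\sum_i p_i=1$ in one line. Your route is shorter and needs no active-set bookkeeping. It also shows that $\lambda=1$ holds whether or not cash is positive, so positive cash is used only to get $\E[1/W^*(I)]=1$ with equality. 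The paper's route has a by-product you lack: $1-P_A=c^*(1-Q_A)$, i.e.\ the explicit cash level $c^*=(1-P_A)/(1-Q_A)$ quoted in the subsequent remark. You would recover it only by substituting your closed form back into the budget.

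Your caveat about ties is right, and sharper than the paper. The paper's proof only obtains $p_i/(\pi_i c^*)\le 1$ for inactive outcomes, then asserts the strict form. For $\pi=(0.5,0.4,0.3)$ and $p=(0.6,0.16,0.24)$ one gets $c^*=0.8$ and $s^*=(0.2,0,0)$, yet $p_3/(\pi_3W_3^*)=1$. Only the weak inequality is actually needed in the KKT check of \cref{thm:parlay-main}.

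Two of your side remarks need adjusting, although the paper's proof does not handle either point.
\begin{enumerate}
\item The case $c^*=1$ is not non-generic. Whenever $\max_i p_i/\pi_i\le 1$, which is an open condition (e.g.\ $p_i=\pi_i/\sum_j\pi_j$ under overround), the optimum is $c^*=1$, $s^*=0$. The clean fix is to assume some $p_i>\pi_i$. Then $c^*=1$ would force $W\equiv 1$, and your KKT inequality $p_i/\pi_i\le 1$ would be contradicted.
\item Your uniqueness argument correctly needs an inactive outcome, but one exists only when $\sum_i\pi_i\neq 1$. With exactly fair prices, cash is redundant and every $c^*\in(0,\min_i p_i/\pi_i]$ is optimal. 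So uniqueness genuinely requires $\sum_i\pi_i\neq 1$, for instance overround. The paper's bare appeal to strict concavity leaves this restriction implicit.
\end{enumerate}
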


\begin{proof}
Introduce a multiplier $\lambda$ for \eqref{eq:single-budget}. Since $c^*>0$, the KKT conditions are
\[
  \frac{p_i}{\pi_i W_i}\le \lambda,
  \qquad
  s_i\!\left(\lambda-\frac{p_i}{\pi_i W_i}\right)=0,
  \qquad
  \sum_{i=1}^n \frac{p_i}{W_i} = \lambda.
\]
Let $A=\{i:s_i>0\}$, and write $P_A:=\sum_{i\in A}p_i$ and $Q_A:=\sum_{i\in A}\pi_i$. For $i\in A$, complementary slackness gives
\[
  W_i=\frac{p_i}{\lambda\pi_i}.
\]
Substituting this into the stationarity condition with respect to $c$ yields
\[
  \lambda
  =\sum_{i\in A}\frac{p_i}{W_i}+\sum_{i\notin A}\frac{p_i}{c}
  =\lambda Q_A+\frac{1-P_A}{c},
\]
hence
\begin{equation}
  1-P_A=\lambda c(1-Q_A).
  \label{eq:single-PA-QA-1}
\end{equation}
Also, for $i\in A$,
\[
  s_i=\pi_i(W_i-c)=\frac{p_i}{\lambda}-c\pi_i.
\]
Summing over $A$ and using the budget identity gives
\[
  1-c=\sum_{i\in A}s_i=\frac{P_A}{\lambda}-cQ_A.
\]
Multiplying by $\lambda$,
\begin{equation}
  \lambda-P_A=\lambda c(1-Q_A).
  \label{eq:single-PA-QA-2}
\end{equation}
Comparing \eqref{eq:single-PA-QA-1} and \eqref{eq:single-PA-QA-2} shows $\lambda=1$. Therefore
\[
  W_i=\frac{p_i}{\pi_i}\quad (i\in A),
\]
and for $i\notin A$ one has $W_i=c$, so the inactive KKT inequalities are exactly
\[
  \frac{p_i}{\pi_i c}\le 1.
\]
Equivalently,
\[
  s_i=\pi_i\!\left(\max\!\left(c,\frac{p_i}{\pi_i}\right)-c\right)
  =(p_i-c\pi_i)_+,
\]
and
\[
  W_i=\max\!\left(c,\frac{p_i}{\pi_i}\right).
\]
This proves \eqref{eq:single-formula} and \eqref{eq:single-kkt-clean}. Uniqueness follows from strict concavity.
\end{proof}

\begin{remark}
If one sorts the outcomes so that $p_i/\pi_i$ decreases with $i$, then the active set is a prefix and the cash level is $c^*=(1-P_A)/(1-Q_A)$, where $P_A=\sum_{i\in A}p_i$ and $Q_A=\sum_{i\in A}\pi_i$; see \cite{LongSingle2026}. In typical fixed-odds settings the overround condition $\sum_i \pi_i>1$ guarantees positive cash and at least one inactive outcome.
\end{remark}

\section{The full ticket menu and the exact parlay formula}
Now consider $m\ge 1$ independent events. Event $\ell$ has outcomes $\Omega_\ell=\{1,\dots,n_\ell\}$, probabilities $p_{\ell i}$, and state prices $\pi_{\ell i}$. A ticket is a vector
\[
  \gamma=(\gamma_1,\dots,\gamma_m),
  \qquad \gamma_\ell\in\{0\}\cup \Omega_\ell,
\]
where $\gamma_\ell=0$ means that the ticket omits event $\ell$. Let $\Gamma$ be the set of all such tickets. For $\gamma\in\Gamma$, write
\[
  \pi_\gamma := \prod_{\ell:\,\gamma_\ell\neq 0} \pi_{\ell,\gamma_\ell},
\]
with the empty product equal to $1$. A stake vector $x=(x_\gamma)_{\gamma\in\Gamma}$ satisfies
\begin{equation}
  x_\gamma\ge 0,
  \qquad
  \sum_{\gamma\in\Gamma} x_\gamma = 1.
  \label{eq:ticket-budget}
\end{equation}
If the realized outcome vector is $I=(I_1,\dots,I_m)$, the ticket $\gamma$ pays $x_\gamma/\pi_\gamma$ exactly when it matches every selected leg, that is, when $\gamma_\ell=I_\ell$ for all $\ell$ with $\gamma_\ell\neq 0$. The terminal wealth is therefore
\begin{equation}
  W_x(I)=\sum_{\gamma\preceq I} \frac{x_\gamma}{\pi_\gamma},
  \label{eq:ticket-wealth}
\end{equation}
where $\gamma\preceq I$ denotes compatibility in this sense. The simultaneous multibet Kelly problem is
\begin{equation}
  V_\parlay := \sup_{x}\; \E[\log W_x(I)]
  \quad \text{subject to \eqref{eq:ticket-budget}.}
  \label{eq:parlay-problem}
\end{equation}

For each event $\ell$, let $(c_\ell^*,s_{\ell i}^*)$ be the unique one-event optimizer from \cref{prop:single-event}, and define the one-event optimal wealth factor
\begin{equation}
  F_\ell(i):=c_\ell^*+\frac{s_{\ell i}^*}{\pi_{\ell i}}.
  \label{eq:event-factor}
\end{equation}

\begin{theorem}[Exact optimal parlay formula]
\label{thm:parlay-main}
Assume each one-event Kelly problem has positive cash. Define
\begin{equation}
  x_\gamma^* := \prod_{\ell:\,\gamma_\ell=0} c_\ell^*
  \prod_{\ell:\,\gamma_\ell\neq 0} s_{\ell,\gamma_\ell}^*.
  \label{eq:ticket-product-formula}
\end{equation}
Then $x^*$ is optimal for \eqref{eq:parlay-problem}. Its terminal wealth factorizes as
\begin{equation}
  W_{x^*}(I)=\prod_{\ell=1}^m F_\ell(I_\ell),
  \label{eq:parlay-factorization}
\end{equation}
and therefore
\begin{equation}
  V_\parlay = \sum_{\ell=1}^m \E[\log F_\ell(I_\ell)].
  \label{eq:parlay-value-factorization}
\end{equation}
Equivalently, the optimal full ticket book is the outer product of the isolated one-event Kelly strategies.
\end{theorem}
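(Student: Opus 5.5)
The plan has three steps. First show that $x^*$ is feasible and check the wealth factorization. Then prove optimality by a first-order (supporting hyperplane) argument for the concave objective, using only \eqref{eq:single-kkt-clean} and independence.

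\emph{Feasibility and factorization.} Each $x_\gamma^*\ge 0$ by construction. For the budget, sum over tickets coordinate by coordinate: $\gamma_\ell$ ranges independently over $\{0\}\cup\Omega_\ell$, so
\[
\sum_\gamma x^*_\gamma=\prod_{\ell=1}^m\Bigl(c_\ell^*+\sum_i s^*_{\ell i}\Bigr)=1
\]
by \eqref{eq:single-budget}. For the wealth, fix $I$. The tickets compatible with $I$ are in bijection with subsets $T\subseteq\{1,\dots,m\}$, via $\gamma_\ell=I_\ell$ for $\ell\in T$ and $\gamma_\ell=0$ otherwise. For such a ticket, $x^*_\gamma/\pi_\gamma=\prod_{\ell\in T}(s^*_{\ell I_\ell}/\pi_{\ell I_\ell})\prod_{\ell\notin T}c^*_\ell$. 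Summing over $T$ and expanding the product gives
\[
W_{x^*}(I)=\prod_\ell\bigl(c^*_\ell+s^*_{\ell I_\ell}/\pi_{\ell I_\ell}\bigr)=\prod_\ell F_\ell(I_\ell),
\]
which is \eqref{eq:parlay-factorization}. Taking logs and expectations then gives \eqref{eq:parlay-value-factorization}, once optimality is known.

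\emph{Optimality.} The map $x\mapsto \E[\log W_x(I)]$ is concave on the simplex \eqref{eq:ticket-budget}, and $W_{x^*}>0$ since every $F_\ell\ge c^*_\ell>0$. So it suffices to verify the KKT conditions with multiplier $1$: for every ticket $\gamma$,
\[
g_\gamma:=\E\!\left[\frac{\1\{\gamma\preceq I\}}{\pi_\gamma W_{x^*}(I)}\right]\le 1,
\]
with equality whenever $x^*_\gamma>0$. Equivalently, for any feasible $x$ the directional derivative $\sum_\gamma (x_\gamma-x^*_\gamma)g_\gamma$ is $\le 0$, and concavity then yields $\E\log W_x\le\E\log W_{x^*}$. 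The key point is that $g_\gamma$ factorizes. By independence of the $I_\ell$ and the product form of $W_{x^*}$,
\[
g_\gamma=\prod_{\ell:\gamma_\ell=0}\E\!\left[\frac{1}{F_\ell(I_\ell)}\right]\prod_{\ell:\gamma_\ell\ne0}\frac{p_{\ell\gamma_\ell}}{\pi_{\ell\gamma_\ell}F_\ell(\gamma_\ell)}.
\]
By \eqref{eq:single-kkt-clean}, each factor of the first kind equals $1$. Each factor of the second kind is $\le 1$, with equality iff $s^*_{\ell\gamma_\ell}>0$. Hence $g_\gamma\le1$ always. If $x^*_\gamma>0$, then every selected leg has $s^*_{\ell\gamma_\ell}>0$, because every $c^*_\ell>0$; so $g_\gamma=1$. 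This is exactly the required KKT system, and it also gives the active-leg criterion.

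\emph{Main obstacle.} The one step needing care is the rigorous passage from these KKT inequalities to global optimality. Stationarity conditions are not enough by themselves, so I would give a direct argument rather than cite KKT theory. Since $\sum_\gamma x_\gamma=1=\sum_\gamma x^*_\gamma$,
\[
\E\!\left[\frac{W_x(I)}{W_{x^*}(I)}\right]=\sum_\gamma x_\gamma g_\gamma\le\sum_\gamma x_\gamma=1.
\]
Jensen's inequality then gives $\E\log(W_x/W_{x^*})\le\log\E[W_x/W_{x^*}]\le 0$. This argument also handles any $x$ whose wealth vanishes on some outcome, since then the left side is $-\infty$.
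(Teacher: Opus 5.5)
Your proposal is correct and matches the paper's proof step for step: the same product expansion for the budget and the wealth, the same independence-based factorization of the ticket gradient using \cref{prop:single-event}, and the same KKT system with multiplier $\lambda=1$. The one difference is the sufficiency step, where you use the explicit bound $\E[W_x/W_{x^*}]=\sum_\gamma x_\gamma g_\gamma\le 1$ plus Jensen instead of citing concavity on the simplex; this finish is slightly more self-contained and also covers portfolios whose wealth vanishes in some state.
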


\begin{proof}
The budget is immediate from repeated expansion:
\[
  \sum_{\gamma\in\Gamma} x_\gamma^*
  = \prod_{\ell=1}^m \left(c_\ell^*+\sum_{i=1}^{n_\ell} s_{\ell i}^*\right)=1.
\]
Likewise, evaluating \eqref{eq:ticket-wealth} at $x^*$ and grouping the compatible tickets event by event gives
\[
  W_{x^*}(I)
  =\prod_{\ell=1}^m\left(c_\ell^*+\frac{s_{\ell,I_\ell}^*}{\pi_{\ell,I_\ell}}\right)
  =\prod_{\ell=1}^m F_\ell(I_\ell),
\]
which is \eqref{eq:parlay-factorization}. By independence,
\[
  \E[\log W_{x^*}(I)]
  =\sum_{\ell=1}^m \E[\log F_\ell(I_\ell)].
\]
Thus it remains only to verify optimality.

Let
\[
  L(x,\lambda)=\E[\log W_x(I)]-\lambda\left(\sum_{\gamma\in\Gamma}x_\gamma-1\right).
\]
For any ticket $\gamma$,
\begin{equation}
  \frac{\partial L}{\partial x_\gamma}(x,\lambda)
  =\E\!\left[\frac{\1_{\{\gamma\preceq I\}}}{\pi_\gamma W_x(I)}\right]-\lambda.
  \label{eq:ticket-gradient}
\end{equation}
At the candidate $x^*$, \eqref{eq:parlay-factorization} and independence yield
\begin{align}
  \E\!\left[\frac{\1_{\{\gamma\preceq I\}}}{\pi_\gamma W_{x^*}(I)}\right]
   &= \prod_{\ell:\,\gamma_\ell\neq 0}
      \frac{p_{\ell,\gamma_\ell}}{\pi_{\ell,\gamma_\ell}F_\ell(\gamma_\ell)}
      \prod_{\ell:\,\gamma_\ell=0}
      \E\!\left[\frac{1}{F_\ell(I_\ell)}\right].
      \label{eq:ticket-factor-grad}
\end{align}
By \cref{prop:single-event}, the second product equals $1$, and for each selected leg
\[
  \frac{p_{\ell i}}{\pi_{\ell i}F_\ell(i)}
  \begin{cases}
    =1,& s_{\ell i}^*>0,\\
    <1,& s_{\ell i}^*=0.
  \end{cases}
\]
Hence \eqref{eq:ticket-factor-grad} is exactly $1$ for every ticket in the support of $x^*$ and is strictly smaller than $1$ for every ticket containing at least one inactive leg. Choosing $\lambda=1$, all KKT conditions hold. Since the feasible set is a simplex and the objective is concave in $x$, $x^*$ is optimal.
\end{proof}

\begin{corollary}[Active-ticket criterion]
\label{cor:active-ticket}
A ticket is active in the optimal multibet book if and only if every selected leg is active in the corresponding one-event Kelly problem. In particular, no optimal parlay contains a singly inactive leg.
\end{corollary}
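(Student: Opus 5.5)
The plan is to read the criterion directly off the product formula \eqref{eq:ticket-product-formula}, and to reinforce the ``only if'' direction with the strict KKT inequalities from the proof of \cref{thm:parlay-main}. That way the conclusion does not depend on which optimal book one takes.

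\emph{Step 1 (support of $x^*$).} By \eqref{eq:ticket-product-formula}, $x^*_\gamma$ is a product of nonnegative factors. Each factor $c_\ell^*$ with $\gamma_\ell=0$ is strictly positive by the positive-cash assumption. Hence $x^*_\gamma>0$ if and only if $s^*_{\ell,\gamma_\ell}>0$ for every $\ell$ with $\gamma_\ell\neq 0$, that is, if and only if every selected leg is active in its one-event problem. This already proves both directions for the canonical optimizer $x^*$. It also gives the ``in particular'' clause: a parlay with an inactive leg has a zero factor.

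\emph{Step 2 (any optimizer).} The optimal book need not be unique, since different ticket books can produce the same wealth profile. The hard part is therefore to show that no optimizer can stake on a ticket with an inactive leg.

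First I argue that the optimal wealth profile is unique. Every joint outcome has probability $\prod_\ell p_{\ell,I_\ell}>0$, and $\log$ is strictly concave. So any two optimizers $x,x'$ must satisfy $W_x(I)=W_{x'}(I)=W_{x^*}(I)$ for all $I$; otherwise their midpoint would do strictly better.

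Next, the gradient \eqref{eq:ticket-gradient} depends on $x$ only through $W_x$. It therefore coincides at every optimizer with the value computed in \eqref{eq:ticket-factor-grad}. For a ticket containing an inactive leg this value is strictly below $1$.

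Finally, any optimizer must satisfy the KKT conditions, which are necessary because the constraints are linear. The multiplier is forced to be $\lambda=1$: it equals the gradient on any ticket carrying positive stake, and on the tickets active in $x^*$ the gradient is exactly $1$. Alternatively, $\lambda$ can be recovered from the Euler-type identity $\sum_\gamma x_\gamma\,\partial_{x_\gamma}\E\log W=1$. Complementary slackness then forces $x_\gamma=0$ on every ticket whose gradient is strictly below $1$.

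This gives the ``only if'' direction for every optimal book. The ``if'' direction is witnessed by $x^*$, which is the book described by \cref{thm:parlay-main}.
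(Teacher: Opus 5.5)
Your Step~1 is exactly the paper's proof. The paper reads the support off \eqref{eq:ticket-product-formula} and uses $c_\ell^*>0$ without comment; you state that positivity explicitly. Step~2 goes beyond the paper, which never considers optimizers other than $x^*$ and tacitly identifies ``the optimal multibet book'' with it. Your argument there is correct. Strict concavity of $\log$ together with $\prod_\ell p_{\ell,I_\ell}>0$ pins down the optimal wealth profile. The gradient \eqref{eq:ticket-gradient} depends on $x$ only through $W_x$. Complementary slackness with $\lambda=1$ then forces zero stake on every ticket with an inactive leg, in every optimal book. Two small refinements. First, to get $\lambda=1$ via KKT you need the dual-feasibility inequality $\lambda\ge 1$ coming from the cash ticket, not just equality on the support of $x$. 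Second, your Euler identity is cleaner and makes KKT necessity unnecessary: since every gradient satisfies $g_\gamma\le 1$, the identity $\sum_\gamma x_\gamma(1-g_\gamma)=1-1=0$ is a sum of nonnegative terms, so each term vanishes. Where your framing is off is the claim that the optimal book need not be unique. Suppose every event is overround, $\sum_i\pi_{\ell i}>1$. Given positive cash, this is exactly the case in which the one-event optimizer is unique, as the paper assumes. Then \eqref{eq:single-PA-QA-1} with $\lambda=1$ and $c_\ell^*>0$ rules out $A=\Omega_\ell$, so each event has an inactive outcome $j_\ell$. On the span of the all-active tickets, the wealth map is a tensor product of injective maps. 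In event $\ell$, the omit coordinate goes to the constant vector $\1$, and the coordinate of an active outcome $i$ goes to the $i$-th unit vector divided by $\pi_{\ell i}$. This map is injective because its value at $j_\ell$ recovers the omit component. Hence your Step~2 actually forces $x=x^*$, and the ``if'' direction holds for every optimizer, not only for $x^*$. Genuine non-uniqueness appears only in the fair-odds case $\sum_i\pi_{\ell i}=1$. There, cash can be swapped at no cost for stakes $c\,\pi_{\ell i}$ on every outcome, and the uniqueness claim of \cref{prop:single-event} already fails.
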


\begin{proof}
Under \eqref{eq:ticket-product-formula}, $x_\gamma^*>0$ exactly when every factor associated with a selected leg is positive, that is, exactly when every selected one-event stake $s_{\ell,\gamma_\ell}^*$ is positive.
\end{proof}

\begin{corollary}[Explicit ticket formulas]
\label{cor:explicit-tickets}
Let $T\subseteq\{1,\dots,m\}$ and choose an outcome $i_\ell\in\Omega_\ell$ for each $\ell\in T$. The optimal stake on the ticket that selects precisely those legs is
\begin{equation}
  x_T^*(i_\ell:\ell\in T)
  = \prod_{\ell\in T} s_{\ell,i_\ell}^* \prod_{r\notin T} c_r^*.
  \label{eq:general-ticket}
\end{equation}
In particular, the full $m$-leg parlay on $(i_1,\dots,i_m)$ has stake
\begin{equation}
  x_{i_1,\dots,i_m}^* = \prod_{\ell=1}^m s_{\ell,i_\ell}^*.
  \label{eq:full-ticket}
\end{equation}
\end{corollary}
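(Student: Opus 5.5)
The plan is to read \cref{cor:explicit-tickets} off directly from \cref{thm:parlay-main}. The only work is a change of indexing: a pair $(T,(i_\ell)_{\ell\in T})$ has to be identified with a ticket $\gamma\in\Gamma$.

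Given $T\subseteq\{1,\dots,m\}$ and outcomes $i_\ell\in\Omega_\ell$ for $\ell\in T$, I will define $\gamma$ by $\gamma_\ell=i_\ell$ for $\ell\in T$ and $\gamma_\ell=0$ for $\ell\notin T$. This map is a bijection between such pairs and $\Gamma$, with inverse $T=\{\ell:\gamma_\ell\neq 0\}$. So ``the ticket that selects precisely those legs'' is well defined and unique. Substituting into \eqref{eq:ticket-product-formula}, the factors with $\gamma_\ell=0$ are exactly the $c_r^*$ with $r\notin T$. The factors with $\gamma_\ell\neq0$ are exactly the $s_{\ell,i_\ell}^*$ with $\ell\in T$. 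This gives \eqref{eq:general-ticket}.

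The full $m$-leg parlay is the special case $T=\{1,\dots,m\}$. There the cash product is empty and equals $1$, which yields \eqref{eq:full-ticket}.

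There is no real obstacle, but one point needs a brief remark: calling these numbers ``the optimal stake'' presumes that the optimizer is unique. \cref{thm:parlay-main} only asserts that $x^*$ is optimal, and the objective need not be strictly concave in $x$, since distinct ticket books can produce the same wealth profile. I would therefore phrase the corollary as describing the optimal book $x^*$ of \cref{thm:parlay-main}. I would also note that the optimal terminal wealth is unique, because $\log$ is strictly concave in $W$ and every state has positive probability, and that this is what the factorization statement really pins down.
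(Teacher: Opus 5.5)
Your proof is correct and is precisely the paper's (implicit) derivation: identify $(T,(i_\ell)_{\ell\in T})$ with the ticket $\gamma$ that equals $i_\ell$ on $T$ and $0$ off $T$, read off \eqref{eq:ticket-product-formula}, and use the empty cash product for $T=\{1,\dots,m\}$. Your uniqueness caveat is more cautious than needed, because the optimal book is in fact unique: although the objective is not strictly concave in $x$, any optimizer $x'$ has the same wealth $W^*$, so $\sum_\gamma x'_\gamma\,\E[\1_{\{\gamma\preceq I\}}/(\pi_\gamma W^*(I))]=\E[W_{x'}(I)/W^*(I)]=1=\sum_\gamma x'_\gamma$, and the strict inequalities in the proof of \cref{thm:parlay-main} then force $x'$ to vanish on every ticket with an inactive leg, while on the remaining tickets $x\mapsto W_x$ is injective (evaluate at states whose omitted coordinates are inactive outcomes, which exist when $\sum_i\pi_{\ell i}\neq 1$), so $x'=x^*$ and ``the optimal stake'' is well defined.
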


\begin{remark}[Two 1X2 matches]
If two soccer matches have isolated one-event Kelly strategies
\[
  (c_1;h_1,d_1,a_1),\qquad (c_2;h_2,d_2,a_2),
\]
then the optimal simultaneous multibet book is the outer product of these two vectors. Thus the full two-leg parlays are $HH=h_1h_2$, $HD=h_1d_2$, $HA=h_1a_2$, $DH=d_1h_2$, and so on, while the single on home in match $1$ only has stake $h_1c_2$, the single on away in match $2$ only has stake $c_1a_2$, and the pure cash ticket has stake $c_1c_2$.
\end{remark}

\section{The singles-only restriction as a truncated product}
The exact parlay theorem reveals what the singles-only restriction removes. Fix an active support family. After restricting each event to its active outcomes, we work in reduced coordinates: $x_j$ denotes the vector of active stakes on event $j$, and $Z_j^{(\eps)}$ denotes the corresponding active-support excess-return vector. Thus $x_j^\top Z_j^{(\eps)}$ is the one-event top-up on block $j$, with $\eps$ a small edge parameter. On that fixed support, the exact parlay wealth is
\begin{equation}
  W^\parlay(x)=\prod_{j=1}^m \bigl(1+x_j^\top Z_j^{(\eps)}\bigr),
  \label{eq:Wpar}
\end{equation}
whereas the singles-only wealth is the first-order truncation
\begin{equation}
  W^\sing(x)=1+\sum_{j=1}^m x_j^\top Z_j^{(\eps)}.
  \label{eq:Wsing}
\end{equation}
Thus parlays contribute exactly the missing interaction terms
\[
  \sum_{j<k} (x_j^\top Z_j^{(\eps)})(x_k^\top Z_k^{(\eps)})
  + \cdots + \prod_{j=1}^m x_j^\top Z_j^{(\eps)}.
\]
The perturbative question is how strongly the optimizer reacts when those terms are removed.

For Sections~4--6 we impose the following standing low-edge assumptions. For each active block $j$, let
\begin{equation}
  \mu_j(\eps):=\E[Z_j^{(\eps)}]=a_j\eps+\OO(\eps^2),
  \qquad
  M_j(\eps):=\E\bigl[Z_j^{(\eps)}Z_j^{(\eps)\top}\bigr]=C_j+\OO(\eps),
  \label{eq:mu-C}
\end{equation}
where $C_j$ is positive definite. Assume also that the event blocks are independent, the returns are uniformly bounded, and the chosen active support remains fixed for sufficiently small $\eps$. Here $M_j(\eps)$ is the second-moment matrix, not the covariance matrix; since $\mu_j(\eps)=\OO(\eps)$, the distinction is immaterial at the orders used below. Define the isolated and singles-only local objectives
\begin{align}
  \mathcal G_{j,\eps}^{\ind}(y)
    &:= \E\Bigl[\log\bigl(1+y^\top Z_j^{(\eps)}\bigr)\Bigr],
    \label{eq:Gind}\\
  \mathcal G_{\eps}^{\sing}(x_1,\dots,x_m)
    &:= \E\Bigl[\log\Bigl(1+\sum_{j=1}^m x_j^\top Z_j^{(\eps)}\Bigr)\Bigr].
    \label{eq:Gsim}
\end{align}
Let $x_j^{\ind}(\eps)$ be the isolated maximizer and $x^{\simu}(\eps)=(x_1^{\simu}(\eps),\dots,x_m^{\simu}(\eps))$ the singles-only simultaneous maximizer.

\begin{proposition}[First-order isolated ray]
\label{prop:first-order-ray}
With $\alpha_j:=C_j^{-1}a_j$, one has
\begin{equation}
  x_j^{\ind}(\eps)=\alpha_j\eps+\OO(\eps^2).
  \label{eq:first-order-ray}
\end{equation}
\end{proposition}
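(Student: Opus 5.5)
The plan is to study the first-order condition of the isolated objective \eqref{eq:Gind} and apply an implicit-function argument after rescaling $y=\eps u$. Since the returns are uniformly bounded, $\log(1+y^\top Z_j^{(\eps)})$ is smooth in $y$ on a fixed neighbourhood of the origin, uniformly in $\eps$. Differentiation under the expectation is therefore justified, and the stationarity condition reads
\[
  \E\!\left[\frac{Z_j^{(\eps)}}{1+y^\top Z_j^{(\eps)}}\right]=0 .
\]
Expanding $1/(1+t)=1-t+t^2R(t)$ with $R$ bounded for $|t|$ small, this becomes
\[
  \mu_j(\eps)-M_j(\eps)\,y+\OO(|y|^2)=0,
\]
where the remainder is uniform in $\eps$.

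Next I substitute $y=\eps u$ and divide by $\eps$. Using \eqref{eq:mu-C}, the equation becomes
\[
  \Phi(u,\eps):=a_j-C_j u+\OO(\eps)+\OO(\eps|u|^2)=0 .
\]
At $\eps=0$ this is solved uniquely by $u=\alpha_j=C_j^{-1}a_j$, and $\partial_u\Phi(\alpha_j,0)=-C_j$ is invertible because $C_j$ is positive definite.

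The main obstacle is regularity in $\eps$, since the assumptions give only $\OO(\cdot)$ expansions for $\mu_j$ and $M_j$, not smoothness. To get around this I will not invoke the implicit function theorem directly. Instead I use a quantitative contraction or Newton–Kantorovich argument: the map $u\mapsto u+C_j^{-1}\Phi(u,\eps)$ has $u$-Lipschitz constant $\OO(\eps)$ on a fixed ball around $\alpha_j$ and moves $\alpha_j$ by $\OO(\eps)$. It therefore has a unique fixed point $u(\eps)$ in that ball with $|u(\eps)-\alpha_j|=\OO(\eps)$.

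Finally I identify this critical point with the isolated maximizer $x_j^{\ind}(\eps)$. The objective is concave in $y$ wherever $1+y^\top Z_j^{(\eps)}>0$, which holds near the origin by boundedness, and the Hessian there equals $-M_j(\eps)+\OO(|y|)$, which is negative definite. An interior critical point of a concave function is its global maximizer, so $x_j^{\ind}(\eps)=\eps u(\eps)$. Strict concavity also makes it unique. This gives $x_j^{\ind}(\eps)=\alpha_j\eps+\OO(\eps^2)$.

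The fixed-support assumption makes the reduced unconstrained problem the right one: the positivity constraints on the active coordinates are not binding for small $\eps$. If $\alpha_j$ has zero components, I would read the fixed-support hypothesis as the statement that the reduced optimizer is the relevant object.
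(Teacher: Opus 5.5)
Your proof is correct, but it takes a different route from the paper's.

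\textbf{The paper's route.} The paper applies the implicit function theorem directly to the score map $g_j(y,\eps)=\E[Z_j^{(\eps)}/(1+y^\top Z_j^{(\eps)})]$ at $(0,0)$, using $D_yg_j(0,0)=-C_j$. It then differentiates $g_j(x_j^{\ind}(\eps),\eps)=0$ at $\eps=0$ to read off $\dot x_j^{\ind}(0)=\alpha_j$. This is shorter and produces a differentiable branch. However, it tacitly requires $g_j$ to be jointly $C^1$ in $(y,\eps)$, which the standing assumptions (only moment expansions of $\mu_j$ and $M_j$) do not supply. Moreover, a $C^1$ branch with derivative $\alpha_j$ gives only $\alpha_j\eps+o(\eps)$; the stated $\OO(\eps^2)$ needs extra smoothness, such as a Lipschitz derivative. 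The paper also never checks that the stationary branch is the maximizer.

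\textbf{Your route.} The rescaling $y=\eps u$ followed by a contraction on a fixed ball around $\alpha_j$ uses exactly the stated hypotheses: boundedness and the expansions of $\mu_j$ and $M_j$. It yields the quantitative bound $|u(\eps)-\alpha_j|=\OO(\eps)$ directly. Your concavity step then closes the identification with $x_j^{\ind}(\eps)$. The paper's route buys brevity and smooth dependence on $\eps$ when the model is smooth; yours buys rigor under the assumptions as actually written.

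\textbf{One step to write out.} The Lipschitz estimate does not follow from the remainder form $\OO(\eps|u|^2)$ alone. It comes from
$D_u\Phi(u,\eps)=D_yg_j(\eps u,\eps)=-\E\bigl[Z_j^{(\eps)}Z_j^{(\eps)\top}/(1+\eps u^\top Z_j^{(\eps)})^2\bigr]=-M_j(\eps)+\OO(\eps)=-C_j+\OO(\eps)$,
uniformly on the ball. Hence $D_u\bigl(u+C_j^{-1}\Phi(u,\eps)\bigr)=\OO(\eps)$. Differentiating under the expectation is justified by boundedness.
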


\begin{proof}
The isolated score map is
\[
  g_j(y,\eps):=\E\!\left[\frac{Z_j^{(\eps)}}{1+y^\top Z_j^{(\eps)}}\right].
\]
At $(y,\eps)=(0,0)$ one has $g_j(0,0)=0$ and
\[
  D_y g_j(0,0)=-C_j,
\]
which is invertible. The implicit function theorem yields a unique $C^1$ stationary branch through the origin. Differentiating $g_j(x_j^{\ind}(\eps),\eps)=0$ at $\eps=0$ gives
\[
  -C_j\dot x_j^{\ind}(0)+a_j=0,
\]
so $\dot x_j^{\ind}(0)=C_j^{-1}a_j=\alpha_j$.
\end{proof}

The next result is the cubic shrinkage formula.

\begin{theorem}[Whitrow asymptotics on a fixed support]
\label{thm:whitrow}
Set
\begin{equation}
  \Lambda_j:=\sum_{k\ne j} a_k^\top C_k^{-1}a_k.
  \label{eq:Lambda}
\end{equation}
Then, for each active block $j$,
\begin{equation}
  x_j^{\simu}(\eps)
  =x_j^{\ind}(\eps)-\Lambda_j\alpha_j\eps^3+\OO(\eps^4)
  =\bigl(1-\Lambda_j\eps^2\bigr)x_j^{\ind}(\eps)+\OO(\eps^4).
  \label{eq:whitrow-main}
\end{equation}
Consequently, the simultaneous singles-only optimizer agrees with the isolated eventwise Kelly optimizer through second order, and the first portfolio interaction is a cubic, event-specific, blockwise scalar shrinkage.
\end{theorem}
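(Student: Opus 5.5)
Both optimizers will be treated as branches of an implicit-function problem, and their Taylor expansions compared through order $\eps^3$. Write the simultaneous score for block $j$ as
\[
  G_j(x,\eps):=\E\!\left[\frac{Z_j^{(\eps)}}{1+\sum_k x_k^\top Z_k^{(\eps)}}\right],
\]
so that $x^{\simu}(\eps)$ solves $G_j(x,\eps)=0$ for all $j$. At $(0,0)$ the Jacobian is block diagonal, $-\diag(C_1,\dots,C_m)$, because independence and $\mu_k(0)=0$ kill the cross blocks $\E[Z_jZ_k^\top]=\mu_j\mu_k^\top$. The implicit function theorem, exactly as in \cref{prop:first-order-ray}, then gives a unique smooth branch with $x_j^{\simu}(\eps)=\alpha_j\eps+\OO(\eps^2)$. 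Boundedness of returns justifies expanding $1/(1+u)$ as a series inside the expectation. Throughout, I write $u_j:=x_j^\top Z_j^{(\eps)}$, which is $\OO(\eps)$ on both branches.

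The key step is to expand $G_j$ to third order in $u=\sum_k u_k$:
\[
  \frac{1}{1+u}=1-u+u^2-u^3+\OO(\eps^4).
\]
Expanding $u^2=u_j^2+2u_j\sum_{k\ne j}u_k+(\sum_{k\ne j}u_k)^2$ and taking expectations blockwise, I will use independence to factor each cross moment. Any term containing a single factor $u_k$ with $k\neq j$ contributes $\E[u_k]=x_k^\top\mu_k=\OO(\eps^2)$, which is one order smaller than $u_k$ itself. Hence
\[
  \E[Z_j u_j\textstyle\sum_{k\ne j}u_k]=\OO(\eps^3)\cdot\OO(\eps)=\OO(\eps^4),
\]
and $\E[Z_j u_k]=\mu_j\,x_k^\top\mu_k=\OO(\eps^3)$. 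This leaves two terms that matter. From the $-u$ term, the cross contribution is $-\mu_j\sum_{k\ne j}x_k^\top\mu_k\approx-\eps^3 a_j\sum_{k\ne j}\alpha_k^\top a_k$. From the $u^2$ term, the piece $\E[Z_j]\,\E[(\sum_{k\ne j}u_k)^2]$ is also of order $\eps\cdot\eps^2$, and it equals $\eps^3 a_j\sum_{k\ne j}\alpha_k^\top C_k\alpha_k$ plus higher order. Since $\alpha_k^\top C_k\alpha_k=\alpha_k^\top a_k=a_k^\top C_k^{-1}a_k$, these two cubic terms cancel; I must check this bookkeeping carefully. If they cancel, the correct contribution must come from elsewhere. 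The likely source is the $\OO(\eps^2)$ corrections to $\mu_j$ multiplied by $\OO(\eps^2)$ quantities, but that gives $\eps^4$. So I will recheck the leading cross term: $-\E[Z_j]\E[u_k]$ is $\OO(\eps^3)$, $+\E[Z_j]\E[u_k^2]$ is $\OO(\eps^3)$, and $\E[Z_jZ_j^\top]x_j\,\E[u_k]$ from $2u_ju_k$ is $C_j\alpha_j\eps\cdot\alpha_k^\top a_k\eps^2=\eps^3 a_j\,a_k^\top C_k^{-1}a_k$. That last term is exactly $\eps^3\Lambda_j a_j$ after summing over $k$, so the net perturbation is $G_j(x,\eps)=g_j(x_j,\eps)+\eps^3\Lambda_j a_j+\OO(\eps^4)$, where $g_j$ is the isolated score. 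The decisive check will be that the two cross terms computed above cancel, leaving this one. Its positive sign, fed through $D g_j=-C_j$, produces the shrinkage direction.

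To finish, I will write $x_j^{\simu}=x_j^{\ind}+\delta_j$, subtract the isolated equation $g_j(x_j^{\ind},\eps)=0$, and linearize:
\[
  -C_j\delta_j+\OO(\eps\,|\delta|)+\OO(\eps^2\,|\delta|)\ \text{cross terms}+\Lambda_j a_j\eps^3=\OO(\eps^4).
\]
A preliminary bound $\delta=\OO(\eps^3)$ follows from the IFT with the perturbation being $\OO(\eps^3)$, and then bootstrapping gives
\[
  \delta_j=C_j^{-1}\Lambda_j a_j\eps^3+\OO(\eps^4).
\]
The sign needs care: if $G_j=g_j+\eps^3\Lambda_ja_j$, solving $-C_j\delta_j+\eps^3\Lambda_ja_j=0$ gives $+\Lambda_j\alpha_j\eps^3$, not a shrinkage. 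I therefore expect the true sign of the $2u_ju_k$ term to be negative: it enters through $-u^2$ in the expansion of the derivative weight, or via the $-u$ term after including $\E[Z_jZ_j^\top x_j]$. Carefully tracking these signs is the main obstacle, and I will resolve it by expanding $\E[Z_j/(1+u)]$ directly to third order with all signs written out. Finally, I will replace $\eps^3\alpha_j$ by $\eps^2x_j^{\ind}(\eps)$, using \cref{prop:first-order-ray} with an $\OO(\eps^4)$ error, to obtain the second form of \eqref{eq:whitrow-main}.
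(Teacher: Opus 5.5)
Your scaffolding matches the paper's: the block-diagonal Jacobian $-\diag(C_1,\dots,C_m)$ at the origin, evaluating the singles-only score at $x^{\ind}(\eps)$, linearizing and bootstrapping $\delta=\OO(\eps^3)$, then trading $\eps^3\alpha_j$ for $\eps^2x_j^{\ind}(\eps)$. The gap is in the one step that carries the content. Your residual $G_j(x^{\ind},\eps)=+\Lambda_ja_j\eps^3+\OO(\eps^4)$ is wrong in both size and sign. Your proposed remedy, that the $2u_ju_k$ term ``should'' enter with a minus sign, is also false: $1/(1+u)$ has $+u^2$, and that term really does contribute positively.

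Write $T_j:=\sum_{k\ne j}u_k$, $\bar T_j:=\E[T_j]$, $V_j:=\E[T_j^2]$. Then there are four cubic cross contributions, not three:
\[
  -\mu_j\bar T_j\ (\text{from } -u),\qquad
  +2M_jx_j\bar T_j\ \text{and}\ +\mu_jV_j\ (\text{from } +u^2),\qquad
  -3M_jx_jV_j\ (\text{from } -u^3).
\]
Both $\bar T_j$ and $V_j$ equal $\Lambda_j\eps^2+\OO(\eps^3)$, and both $\mu_j$ and $M_jx_j^{\ind}$ equal $a_j\eps+\OO(\eps^2)$. So these terms carry coefficients $-1,+2,+1,-3$ on $\Lambda_ja_j\eps^3$, and they sum to $-\Lambda_ja_j\eps^3$. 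This is exactly the paper's decomposition $F_j=g_j+\bar T_j(-\mu_j+2M_jx_j)+V_j(\mu_j-3M_jx_j)+\OO(\eps^4)$. The leftover pieces $3\E[Z_ju_j^2]\bar T_j$ and $\mu_j\E[T_j^3]$ are genuinely $\OO(\eps^4)$.

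Concretely, you make two errors. First, you bound $\E[Z_ju_jT_j]$ by $\OO(\eps^4)$. That is false, since $\E[Z_ju_jT_j]=M_jx_j\bar T_j=\OO(\eps)\cdot\OO(\eps^2)$. When you later recover this term, you drop the factor $2$ from $2u_jT_j$. Second, you never examine the cross terms of $-u^3$. Your heuristic that each off-block factor gains an extra $\eps$ through $\E[u_k]=\OO(\eps^2)$ holds only for a single off-block factor. A squared factor has $\E[u_k^2]=\alpha_k^\top C_k\alpha_k\eps^2+\OO(\eps^3)$, with no mean suppression. Hence $-3\E[Z_ju_j]\E[T_j^2]$ survives at order $\eps^3$, and it is precisely this term that makes the net sign negative.

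Your bookkeeping as written gives a spurious expansion $x_j^{\simu}-x_j^{\ind}=+\Lambda_j\alpha_j\eps^3$; restoring the factor $2$ gives $+2\Lambda_j\alpha_j\eps^3$. Either contradicts Thorp's exact two-binary solution $f_1=m_1-m_1m_2^2+\cdots$. There one checks directly that $\E[Z_1/(1+m_1Z_1+m_2Z_2)]=-m_1m_2^2+\OO(\eps^5)$. Once the $-3M_jx_jV_j$ term is included, the residual is $-\Lambda_ja_j\eps^3$. Your linearization $-C_j\delta_j-\Lambda_ja_j\eps^3=\OO(\eps^4)$ then gives $\delta_j=-\Lambda_j\alpha_j\eps^3+\OO(\eps^4)$, as claimed.
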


\begin{proof}
Write
\[
  u_j:=x_j^\top Z_j^{(\eps)},
  \qquad
  T_j:=\sum_{k\ne j} x_k^\top Z_k^{(\eps)},
  \qquad
  U:=u_j+T_j.
\]
The singles-only score map for block $j$ is
\[
  F_j(x,\eps):=\E\!\left[\frac{Z_j^{(\eps)}}{1+\sum_{\ell=1}^m x_\ell^\top Z_\ell^{(\eps)}}\right].
\]
Because each block stake is $\OO(\eps)$, one has $U=\OO(\eps)$ uniformly, so
\[
  \frac{1}{1+U}=1-U+U^2-U^3+\OO(\eps^4).
\]
Substituting this into $F_j$ and using independence of $T_j$ and $Z_j^{(\eps)}$ yields the decomposition
\begin{equation}
  F_j(x,\eps)
  = g_j(x_j,\eps)
    + \bar T_j\bigl(-\mu_j+2M_jx_j\bigr)
    + V_j\bigl(\mu_j-3M_jx_j\bigr)
    + \OO(\eps^4),
  \label{eq:Fj-decomp}
\end{equation}
where
\[
  \mu_j:=\E[Z_j^{(\eps)}],
  \qquad
  M_j:=\E\bigl[Z_j^{(\eps)}Z_j^{(\eps)\top}\bigr],
  \qquad
  \bar T_j:=\E[T_j],
  \qquad
  V_j:=\E[T_j^2].
\]
The only mixed cubic contributions come from the $-U^3$ term in the geometric expansion. The potentially dangerous ones are of the forms
\[
  \E[Z_j u_j^2]\bar T_j,
  \qquad
  \E[Z_j]\bar T_j^2,
  \qquad
  \E[Z_j u_j]V_j,
  \qquad
  \E[Z_jT_j^3],
\]
and each is $\OO(\eps^4)$ because $u_j=\OO(\eps)$, $\bar T_j=\OO(\eps^2)$, $V_j=\OO(\eps^2)$, and $\E[Z_j]=\OO(\eps)$.

Now evaluate \eqref{eq:Fj-decomp} at the isolated vector $x^{\ind}(\eps)$. Since $g_j(x_j^{\ind}(\eps),\eps)=0$, only the cross-event terms remain. By \cref{prop:first-order-ray},
\[
  x_k^{\ind}(\eps)=\alpha_k\eps+\OO(\eps^2),
  \qquad
  \mu_k=a_k\eps+\OO(\eps^2).
\]
Hence
\begin{align}
  \bar T_j
   &= \sum_{k\ne j}\E\bigl[x_k^{\ind}(\eps)^\top Z_k^{(\eps)}\bigr]
    = \sum_{k\ne j}\alpha_k^\top a_k\,\eps^2+\OO(\eps^3)
    = \Lambda_j\eps^2+\OO(\eps^3),
    \label{eq:Tbar-asymp}\\
  V_j
   &= \sum_{k\ne j}\E\bigl[(x_k^{\ind}(\eps)^\top Z_k^{(\eps)})^2\bigr]+\OO(\eps^4)
    = \sum_{k\ne j}\alpha_k^\top C_k\alpha_k\,\eps^2+\OO(\eps^3)
    = \Lambda_j\eps^2+\OO(\eps^3),
    \label{eq:Vj-asymp}
\end{align}
where the cross terms vanish by independence, and $\alpha_k^\top C_k\alpha_k=\alpha_k^\top a_k=a_k^\top C_k^{-1}a_k$. Also,
\[
  M_jx_j^{\ind}(\eps)=C_j\alpha_j\eps+\OO(\eps^2)=a_j\eps+\OO(\eps^2),
\]
so
\[
  -\mu_j+2M_jx_j^{\ind}=a_j\eps+\OO(\eps^2),
  \qquad
  \mu_j-3M_jx_j^{\ind}=-2a_j\eps+\OO(\eps^2).
\]
Substituting these expansions into \eqref{eq:Fj-decomp} gives the residual of the full singles-only KKT map at the isolated point:
\begin{equation}
  F_j(x^{\ind}(\eps),\eps)
  = -\Lambda_j a_j\eps^3+\OO(\eps^4).
  \label{eq:residual}
\end{equation}

Set $\Delta_j(\eps):=x_j^{\simu}(\eps)-x_j^{\ind}(\eps)$. By the mean-value theorem,
\[
  0=F(x^{\simu}(\eps),\eps)=F(x^{\ind}(\eps),\eps)+D_xF(\xi(\eps),\eps)\,\Delta(\eps)
\]
for some intermediate point $\xi(\eps)$. Since
\[
  D_xF(\xi(\eps),\eps)=-\diag(C_1,\dots,C_m)+\OO(\eps),
\]
its inverse exists and equals $-\diag(C_1^{-1},\dots,C_m^{-1})+\OO(\eps)$. Combining this with \eqref{eq:residual} yields
\[
  \Delta_j(\eps)=-\Lambda_j C_j^{-1}a_j\,\eps^3+\OO(\eps^4)
  =-\Lambda_j\alpha_j\eps^3+\OO(\eps^4),
\]
which is \eqref{eq:whitrow-main}. The relative form follows by multiplying \eqref{eq:first-order-ray} by $(1-\Lambda_j\eps^2)$.
\end{proof}

\begin{remark}[Interpretation]
The theorem says that the singles-only simultaneous system decouples through second order. The first place where the rest of the portfolio matters is cubic order, and even there the effect is only to shrink each active event block along its own isolated first-order ray. The shrinkage need not be universal: the coefficient $\Lambda_j$ depends on the other events and therefore on $j$.
\end{remark}

\section{How much is lost by forbidding parlays?}
The exact parlay optimizer is the natural benchmark because, by \cref{thm:parlay-main}, it is explicit and eventwise. The singles-only optimizer can only do worse, but the perturbative theorem shows that the loss starts surprisingly late.

\begin{proposition}[Quartic value loss]
\label{prop:quartic-loss}
Let
\[
  V_\sing(\eps):=\mathcal G_{\eps}^{\sing}(x^{\simu}(\eps)),
  \qquad
  V_\parlay(\eps):=\sum_{j=1}^m \mathcal G_{j,\eps}^{\ind}(x_j^{\ind}(\eps)).
\]
Then
\begin{equation}
  0\le V_\parlay(\eps)-V_\sing(\eps)=\OO(\eps^4).
  \label{eq:quartic-loss}
\end{equation}
In particular, excluding parlays changes the growth rate only at quartic order, even though the exact ticket book itself contains all mixed terms.
\end{proposition}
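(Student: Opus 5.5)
Two things need to be shown: the sign $V_\parlay-V_\sing\ge 0$, and the $\OO(\eps^4)$ bound.

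\textbf{Sign.} On the fixed support, any singles-only book is also a feasible ticket book. Concretely, $W^\sing(x)$ is the wealth of the ticket vector that puts $x_{j,i}$ on the single $(j,i)$ and the remainder on cash. Hence $V_\sing(\eps)\le\sup_x\E[\log W^\parlay(x)]$. By \cref{thm:parlay-main}, the supremum of the full parlay problem equals $\sum_j$ of the isolated optima, which in the local coordinates is $\sum_j\mathcal G^{\ind}_{j,\eps}(x_j^{\ind}(\eps))=V_\parlay(\eps)$. Alternatively, I can argue directly: the product profile $W^\parlay(x^{\ind})$ has log equal to $\sum_j\log(1+x_j^{\ind\top}Z_j)$. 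By independence its expectation is $V_\parlay$, and it is an upper bound because the blocks decouple.

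\textbf{Quartic bound.} I will use the lower bound $V_\sing(\eps)\ge\mathcal G^\sing_\eps(x^{\ind}(\eps))$, since $x^\simu$ is the maximizer. So it suffices to show
\[
\E\Bigl[\sum_j\log(1+u_j)-\log\Bigl(1+\sum_j u_j\Bigr)\Bigr]=\OO(\eps^4),\qquad u_j:=x_j^{\ind}(\eps)^\top Z_j^{(\eps)} .
\]
By \cref{prop:first-order-ray}, $u_j=\OO(\eps)$ uniformly, using bounded returns.

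I will expand both logarithms to third order with remainder $\OO(\eps^4)$, writing $S=\sum_j u_j$:
\[
\sum_j\bigl(u_j-\tfrac12u_j^2+\tfrac13u_j^3\bigr)-\bigl(S-\tfrac12S^2+\tfrac13S^3\bigr).
\]
The linear terms cancel. The quadratic difference is $\sum_{j<k}u_ju_k$, whose expectation is $\sum_{j<k}\E[u_j]\E[u_k]$ by independence. Each factor satisfies $\E[u_j]=x_j^{\ind\top}\mu_j=\OO(\eps)\cdot\OO(\eps)=\OO(\eps^2)$, so the product is $\OO(\eps^4)$.

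The cubic difference consists of the mixed monomials $u_j^2u_k$ and $u_ju_ku_l$ with distinct indices. By independence these give $\E[u_j^2]\E[u_k]=\OO(\eps^2)\OO(\eps^2)$ and $\E[u_j]\E[u_k]\E[u_l]=\OO(\eps^6)$. Both are $\OO(\eps^4)$.

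\textbf{Main obstacle.} The key point, and the only delicate one, is that $\E[u_k]=\OO(\eps^2)$ rather than $\OO(\eps)$, because both the stake and the mean return are $\OO(\eps)$. Without this, the cross terms would only be $\OO(\eps^2)$. The remainder control for the logarithm expansions is routine from uniform boundedness and $u_j,S=\OO(\eps)$. Combining the sign argument with this bound gives \eqref{eq:quartic-loss}. Using $x^\simu$ in place of $x^{\ind}$ is unnecessary, because the lower bound at $x^{\ind}$ already suffices.
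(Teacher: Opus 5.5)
Your proposal is correct and follows the paper's proof. The sign comes from a singles book being a feasible ticket book together with \cref{thm:parlay-main}, and the quartic bound comes from testing the singles-only objective at $x^{\ind}(\eps)$ and using independence with $\E[u_k]=\OO(\eps^2)$ to kill the mixed terms.

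Your explicit third-order expansion with a uniform $\OO(\eps^4)$ remainder is slightly more careful than the paper's phrasing. The paper says every mixed term contains a factor $\E[u_k]$, but terms like $u_j^2u_k^2$ do not; they are quartic anyway. Your ``alternatively'' remark, that the product profile is an upper bound ``because the blocks decouple,'' is not an argument as written, so rely on the feasibility route instead.
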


\begin{proof}
By construction, the singles-only feasible set is a restriction of the full parlay feasible set, so $V_\sing(\eps)\le V_\parlay(\eps)$.

For the reverse estimate, evaluate the singles-only objective at the isolated vector $x^{\ind}(\eps)$. Set $u_j:=x_j^{\ind}(\eps)^\top Z_j^{(\eps)}$. Then $u_j=\OO(\eps)$, $\E[u_j]=\OO(\eps^2)$, and the blocks are independent. The exact parlay value at this point is
\[
  \sum_{j=1}^m \E[\log(1+u_j)],
\]
whereas the singles-only value is
\[
  \E\left[\log\left(1+\sum_{j=1}^m u_j\right)\right].
\]
Taylor expansion of the logarithm around $1$ shows that the difference between these two expressions is a sum of mixed expectations involving at least two distinct blocks. By independence, every such mixed term contains at least one factor $\E[u_k]=\OO(\eps^2)$, and all remaining factors are at most $\OO(\eps^2)$ in expectation. Therefore
\begin{equation}
  V_\parlay(\eps)-\mathcal G_{\eps}^{\sing}(x^{\ind}(\eps))=\OO(\eps^4).
  \label{eq:quartic-at-isolated}
\end{equation}
Since $x^{\simu}(\eps)$ maximizes the singles-only objective,
\[
  \mathcal G_{\eps}^{\sing}(x^{\ind}(\eps))
  \le
  \mathcal G_{\eps}^{\sing}(x^{\simu}(\eps))
  =V_\sing(\eps).
\]
Hence
\[
  0\le V_\parlay(\eps)-V_\sing(\eps)
  \le V_\parlay(\eps)-\mathcal G_{\eps}^{\sing}(x^{\ind}(\eps))
  =\OO(\eps^4),
\]
which proves \eqref{eq:quartic-loss}.
\end{proof}

\begin{remark}
The proof only uses the isolated portfolio as a test point. Combining \cref{thm:whitrow} with a Taylor expansion of the singles-only objective shows in fact that
\[
  \mathcal G_{\eps}^{\sing}(x^{\simu}(\eps))
  -\mathcal G_{\eps}^{\sing}(x^{\ind}(\eps))
  =\OO(\eps^6),
\]
so the quartic bound is essentially sharp at the level of the first omitted term.
\end{remark}

\section{Binary check against Thorp}
The cubic shrinkage formula recovers the classical two-binary exact solution. Consider two independent even-money binary bets with return variables
\[
  Z_j\in\{+1,-1\},
  \qquad
  \E[Z_j]=m_j,
\]
so that the isolated Kelly fraction on bet $j$ is exactly $m_j$. In the notation of \cref{thm:whitrow}, this is the one-dimensional case with
\[
  a_j\eps=m_j,\qquad C_j=\E[Z_j^2]=1.
\]
Hence $\alpha_j=C_j^{-1}a_j=a_j$, so the theorem predicts
\[
  f_1^{\simu}=m_1-m_1m_2^2+\OO(\eps^4),
  \qquad
  f_2^{\simu}=m_2-m_2m_1^2+\OO(\eps^4).
\]

In this special binary setting one can go further and solve the simultaneous singles-only problem exactly. Writing $f_1,f_2$ for the two stake fractions, the first-order conditions are
\[
  \E\!\left[\frac{Z_1}{1+f_1Z_1+f_2Z_2}\right]=0,
  \qquad
  \E\!\left[\frac{Z_2}{1+f_1Z_1+f_2Z_2}\right]=0.
\]
A direct calculation gives the unique solution
\begin{equation}
  f_1=\frac{m_1(1-m_2^2)}{1-m_1^2m_2^2},
  \qquad
  f_2=\frac{m_2(1-m_1^2)}{1-m_1^2m_2^2},
  \label{eq:thorp-exact}
\end{equation}
which is the two-bet formula discussed by Thorp \cite{Thorp2006}. Expanding \eqref{eq:thorp-exact} at $(m_1,m_2)=(0,0)$ yields
\[
  f_1=m_1-m_1m_2^2+\OO\!\left((|m_1|+|m_2|)^5\right),
  \qquad
  f_2=m_2-m_2m_1^2+\OO\!\left((|m_1|+|m_2|)^5\right).
\]
Since $m_j=\OO(\eps)$, this becomes
\[
  f_1=m_1-m_1m_2^2+\OO(\eps^5),
  \qquad
  f_2=m_2-m_2m_1^2+\OO(\eps^5).
\]
Thus the general cubic shrinkage law matches the exact binary formula and sharpens to a fifth-order remainder in this symmetric one-dimensional case.

\section{Conclusion}
The exact parlay problem is much simpler than it first appears. Once cash is recognized as an implicit all-state position, the full multibet optimizer is just the outer product of the isolated one-event Kelly strategies. The product structure immediately explains which tickets are active and why no inactive single leg can appear inside an optimal parlay.

That exact factorization also clarifies the singles-only problem. The singles-only wealth is the first-order truncation of the exact product wealth. For small edges, the missing mixed terms cost only quartic growth, and the optimal singles-only stakes are obtained from the isolated Kelly stakes by a cubic, event-specific shrinkage. That is the precise mathematical content behind Whitrow's numerical near-proportionality observation.

\end{document}